%%format: latex2e%%
%%Title: 
%%Author: P. L. Robinson

\documentclass[10pt]{amsart}
\usepackage{enumerate,amsmath,amssymb,latexsym,
amsfonts, amsthm, amscd, MnSymbol}

%%\textwidth 14.2truecm
%%\textheight 22.9truecm

\setlength{\textwidth}{14.5cm}
\setlength{\textheight}{22cm}

\hoffset=-2.0cm
\voffset=-.8cm

%%%%%%%%%%%%%%%%%%%%%%%%%%%%%%%%%%%%%%%%%%%%%%%%%%%%%%%%%%%%%%%%%%%

\theoremstyle{plain}

\newtheorem{theorem}{Theorem}

\numberwithin{equation}{section}

\newcommand{\R}{\mathbb{R}}
\newcommand{\C}{\mathbb{C}}

\newcommand{\sm}{\rm sm}
\newcommand{\cm}{\rm cm}

\newcommand{\ii}{\rm i}

\addtocounter{theorem}{-1}

%%%%%%%%%%%%%%%%%%%%%%%%%%%%%%%%%%%%%%%%%%%%%%%%%%%%%%%%%%%%%%%%%%%%%%%%%

\begin{document}

\title {Meromorphic functions in a first-order system}

\date{}

\author[P.L. Robinson]{P.L. Robinson}

\address{Department of Mathematics \\ University of Florida \\ Gainesville FL 32611  USA }

\email[]{paulr@ufl.edu}

\subjclass{} \keywords{}

\begin{abstract}

We consider the differential equations $s' = c^5$ and $c' = - s^5$ with initial conditions $s(0) = 0$ and $c(0) = 1$. We show that $s^2 c^2$ extends to the reciprocal of the Weierstrass elliptic function with invariants $g_2 = 0$ and $g_3 = 16$.  We also show that powers of $s^2 c^2$ are the only functions of the form $s^m c^n$ that extend meromorphically in the plane. 
\end{abstract}

\maketitle

\medbreak

\section*{Introduction} 

\medbreak 

To each integer $p$ greater than one, we associate the initial value problem 
$$s\,' = c^{p - 1}, \, c\,' = - s^{p - 1}; \: s(0) = 0, \, c(0) = 1.$$
A solution pair to this IVP on a connected neighbourhood of the origin is immediately seen to satisfy a generalization of the Pythagorean identity: 
$$s^p + c^p = 1.$$

\medbreak 

The first IVP in this sequence is of course very familiar: if $p = 2$ then $s = \sin$ and $c = \cos$. Beyond the observation that these solutions are entire and simply periodic, we make no further comment on this case. 

\medbreak 

The next IVP in the sequence is decidedly less familiar: if $p = 3$ then the solutions $s = \sm$ and $c = \cm$ are meromomorphic and doubly periodic: they are the Dixonian elliptic functions, named in honour of Dixon [1] who actually investigated a whole family of third-order elliptic functions including them. 

\medbreak 

We considered the case $p = 4$ in [4]. There, we proved that neither $s$ nor $c$ is elliptic but that the quadratic expressions $s c$, $s^2$ and $c^2$ are elliptic: in fact, we identified them as rational expressions in the lemniscatic Weierstrass function and its derivative. We remark that after [4] was completed, we found that it has nontrivial overlap with [3]. 

\medbreak 

Here, we consider the case $p = 6$. First among our findings is the fact that the quartic expression $s^2 c^2$ is elliptic, with simple poles: indeed, we find that it is the reciprocal of the (scaled equianharmonic) Weierstrass function with invariants $g_2 = 0$ and $g_3 = 16$. This finding goes towards our proof that $s^2 c^2$ and its powers are the only functions of the form $s^m c^n$ that extend meromorphically in the plane. As some relief, we also establish positive results regarding simply periodic meromorphic extensions to bands centred on the real axis. 

\medbreak 

In retrospect, the approach taken here may be applied to the case $p = 4$ considered in [4]: if $s$ and $c$ denote the solutions of the initial value problem in that case, then $s^m c^n$ does not extend to the plane meromorphically unless $m + n$ is even, when it actually extends to an elliptic function. Looking ahead, in the case $p = 5$ that we have temporarily passed over, the corresponding function $s^m c^n$ is only elliptic when $m = n = 0$, because of the five-fold symmetry present in that case. We shall report on such further matters in due course. 

\medbreak 

\section*{Meromorphicity and Periodicity} 

\medbreak 

We shall study the initial value problem ({\bf IVP}): 
$$s\,' = c^5, \, c\,' = - s^5 ; \; s(0) = 0, \, c(0) = 1$$ 
and begin with the most elementary observations regarding it and its solutions. 

\medbreak 

First of all, it is convenient to note the following analogue of the `Pythagorean' identity. 

\medbreak 

\begin{theorem} \label{Pyth} 
Any solution pair to {\bf IVP} on a connected open neighbourhood of the origin satisfies 
$$s^6 + c^6 = 1.$$ 
\end{theorem}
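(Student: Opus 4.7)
The plan is to exhibit $s^6 + c^6$ as a first integral of the system by differentiation, and then evaluate the resulting constant at the origin. On a connected open neighbourhood $U$ of $0$ on which a solution pair $(s,c)$ exists, the function $F := s^6 + c^6$ is differentiable, and I would compute
\[
F\,' \;=\; 6\, s^5 s\,' + 6\, c^5 c\,' \;=\; 6\, s^5 c^5 + 6\, c^5 (- s^5) \;=\; 0,
\]
where the substitutions use the two ODEs of the IVP.

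Since $U$ is connected and $F\,' \equiv 0$ on $U$, the function $F$ is constant on $U$. The initial conditions $s(0) = 0$ and $c(0) = 1$ yield $F(0) = 0^6 + 1^6 = 1$, so $F \equiv 1$ on $U$, which is the desired identity $s^6 + c^6 = 1$.

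There is really no obstacle here: the identity is simply the statement that $s^p + c^p$ is a conserved quantity for the system $s\,' = c^{p-1}$, $c\,' = -s^{p-1}$, a fact already signalled in the Introduction. The only thing worth emphasising is the use of connectedness of the domain, which is what allows one to promote the vanishing of $F\,'$ to the constancy of $F$.
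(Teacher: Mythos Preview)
Your proof is correct and is essentially identical to the paper's own argument: differentiate $s^6 + c^6$, use the ODEs to see the derivative vanishes, invoke connectedness for constancy, and evaluate at the origin. The paper is slightly more terse, but the content is the same.
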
 

\begin{proof} 
The differential equations in {\bf IVP} show that $(s^6 + c^6)\,' = 0$ so that $s^6 + c^6$ is constant; the initial conditions in {\bf IVP} show that $s^6 + c^6$ equals $1$ at the origin and hence throughout its connected domain. 
\end{proof} 

\medbreak 

Note that {\bf IVP} does have exactly one pair $(s, c)$ of holomorphic solutions in a suitably small neighbourhood of the origin; this is an elementary application of the classical Picard existence-uniqueness theorem, as follows. 

\medbreak 

\begin{theorem} \label{Picard} 
The initial value problem {\bf IVP} 
$$s\,' = c^5, \, c\,' = - s^5 ; \; s(0) = 0, \, c(0) = 1$$ 
has a unique holomorphic solution pair $(s, c)$ in the open disc $B_r (0)$ of radius $r = 4^4/5^5.$ 
\end{theorem}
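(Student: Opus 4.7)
The plan is to apply the Banach contraction mapping theorem to the integral reformulation of \textbf{IVP}. Setting
\[
T(s,c)(z) = \Bigl( \int_0^z c(w)^5 \, \rd w, \; 1 - \int_0^z s(w)^5 \, \rd w \Bigr),
\]
a holomorphic pair $(s,c)$ on a disc about $0$ solves \textbf{IVP} precisely when it is a fixed point of $T$. I would carry out the fixed-point argument on the Banach space of pairs of continuous functions $\overline{B_\rho(0)} \to \C$ under the sup norm, for auxiliary radii $\rho < r = 4^4/5^5$. Since the iterates of $T$ starting from the constant pair $(0,1)$ are polynomials, the Banach-theorem limit is automatically holomorphic on the open disc $B_\rho(0)$.

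The critical choice of invariant set is the closed ball $X_M$ of pairs $(s,c)$ with $\|s\|_\infty \leq M$ and $\|c\|_\infty \leq M$, taking $M = 5/4$. Since $\rho M^5 < rM^5 = (4^4/5^5)(5^5/4^5) = 1/4$, the straight-ray estimates
\[
\Bigl\| \textstyle\int_0^{\,\cdot\,} c^5 \Bigr\|_\infty \leq \rho M^5 < \tfrac{1}{4} < M \quad \text{and} \quad \Bigl\| 1 - \textstyle\int_0^{\,\cdot\,} s^5 \Bigr\|_\infty \leq 1 + \rho M^5 < \tfrac{5}{4} = M
\]
show that $T(X_M) \subset X_M$. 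For the contraction estimate, the elementary bound $|a^5 - b^5| \leq 5 \max(|a|,|b|)^4 |a-b|$, integrated along the straight segment from $0$ to $z$, gives $T$ a Lipschitz constant at most $5 \rho M^4 < 5 r M^4 = 1$ on $X_M$. Banach's theorem then produces a unique holomorphic fixed point on $B_\rho(0)$.

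To reach the claimed radius exactly, I would observe that for $\rho < \rho' < r$ the uniqueness of the fixed point forces the two solutions to coincide on $B_\rho(0)$; taking any sequence $\rho_n \uparrow r$ patches them into a unique holomorphic solution pair $(s,c)$ on $B_r(0)$. The only real subtlety is this boundary issue: at $\rho = r$ the estimate degenerates to $5 \rho M^4 = 1$, a weak contraction, so the Banach theorem cannot be invoked directly on $\overline{B_r(0)}$ and one must approach $r$ through subdiscs.

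The choice $M = 5/4$ is sharp: both constraints $1 + \rho M^5 \leq M$ and $5 \rho M^4 \leq 1$ meet with equality at $\rho = 4^4/5^5$, and any other positive $M$ forces one of the two to bind at a strictly smaller radius. Thus the radius produced by this method is exactly the one stated, and the main (modest) obstacle, namely the failure of strict contraction at the boundary, is handled by the patching argument just described.
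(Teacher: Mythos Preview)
Your proof is correct and follows essentially the same Picard/contraction-mapping route as the paper. The only cosmetic difference is that the paper invokes the classical Picard theorem as a black box on the polydisc $|s|\leq b$, $|c-1|\leq b$ and maximizes $b/(b+1)^5$ at $b=1/4$, whereas you carry out the Banach argument explicitly on the polydisc $\|s\|_\infty,\|c\|_\infty \leq 5/4$; since $5/4 = 1 + 1/4$, the two set-ups coincide at the optimum and yield the same radius.
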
 

\begin{proof} 
We use the Picard theorem essentially as it appears in Section 2.3 of [2]. With $b > 0$ fixed, if $| s | \leqslant b$ and $|c - 1 | \leqslant b$ then $| s^5 | \leqslant b^5$ and $| c^5 | \leqslant (b + 1)^5$. The (autonomous) Picard theorem therefore guarantees that {\bf IVP} has a unique holomorphic solution pair in the disc about the origin of radius $b/(b + 1)^5$. Taking $b \to 1/4$ maximizes this. 
\end{proof} 

\medbreak 

We remark that we could increase the radius of the disc by instead first solving for $s$ the initial value problem  
$$s\,' = (1 - s^6)^{5/6} ; \; s (0) = 0$$
and then setting $c = (1 - s^6)^{1/6}$; here, Theorem \ref{Pyth} is involved and the powers assume their principal values. 

\medbreak 

Our primary interest is not in local properties of solutions to {\bf IVP} but rather in their global properties. Nevertheless, it is convenient to pause at this point and establish certain symmetries enjoyed by the solution pair $(s, c)$ in the disc $B_r (0)$. If we are able to extend $s$ and $c$ beyond $B_r (0)$ to suitably symmetric connected open neighbourhoods of the origin, the principle of analytic continuation guarantees that these symmetries will continue to be enjoyed by the extensions. 

\medbreak 

The solution pair $(s, c)$ to {\bf IVP} has a six-fold rotational symmetry. Let 
$$\gamma = e^{\ii \pi /3} = \tfrac{1}{2} (1 + \ii \sqrt{3})$$
so that $\gamma^3 = - 1$ and $\gamma$ is a primitive sixth root of unity. 

\medbreak 

\begin{theorem} \label{six} 
If $z \in B_r (0)$ then $s (\gamma z) = \gamma \, s(z)$ and $c( \gamma z) = c(z)$. 
\end{theorem}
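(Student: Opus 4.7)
The plan is to exploit the uniqueness clause of Theorem \ref{Picard} by verifying that suitably rotated versions of $s$ and $c$ solve the same initial value problem.

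Concretely, I would define, for $z \in B_r(0)$, the auxiliary functions
$$\tilde{s}(z) = \gamma^{-1} s(\gamma z), \qquad \tilde{c}(z) = c(\gamma z).$$
These are holomorphic on $B_r(0)$ because $|\gamma z| = |z| < r$, so the values $s(\gamma z)$ and $c(\gamma z)$ are well defined by Theorem \ref{Picard}. The goal reduces to showing that the pair $(\tilde{s}, \tilde{c})$ satisfies \textbf{IVP}; uniqueness then forces $\tilde{s} = s$ and $\tilde{c} = c$, which is precisely the claimed identities.

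Checking the initial conditions is immediate: $\tilde{s}(0) = \gamma^{-1} s(0) = 0$ and $\tilde{c}(0) = c(0) = 1$. For the differential equations, the chain rule together with $s' = c^5$ and $c' = -s^5$ gives
$$\tilde{s}\,'(z) = \gamma^{-1} \cdot \gamma \cdot s'(\gamma z) = c(\gamma z)^5 = \tilde{c}(z)^5,$$
$$\tilde{c}\,'(z) = \gamma \cdot c'(\gamma z) = -\gamma \cdot s(\gamma z)^5 = -\gamma \cdot \bigl(\gamma \tilde{s}(z)\bigr)^5 = -\gamma^6 \tilde{s}(z)^5 = -\tilde{s}(z)^5,$$
where the last step uses $\gamma^6 = 1$. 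The crucial arithmetic fact driving the symmetry is thus the compatibility of the exponent $5$ appearing in \textbf{IVP} with the sixth root of unity $\gamma$: one needs $\gamma \cdot \gamma^5 = 1$, which is exactly $\gamma^6 = 1$.

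There is no real obstacle to carrying this out; the only subtle point is the bookkeeping of the factor of $\gamma$ in the definition of $\tilde{s}$ (one must rescale $s(\gamma z)$ by $\gamma^{-1}$ but leave $c(\gamma z)$ alone), and the observation that the differential system was engineered precisely so that this rescaling closes up under the sixth-root rotation. After applying Theorem \ref{Picard}, the identities $\tilde{s} = s$ and $\tilde{c} = c$ on $B_r(0)$ yield $s(\gamma z) = \gamma s(z)$ and $c(\gamma z) = c(z)$, completing the proof.
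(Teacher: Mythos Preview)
Your proof is correct and follows exactly the paper's approach: define $\tilde{s}(z)=\gamma^{-1}s(\gamma z)=\overline{\gamma}\,s(\gamma z)$ and $\tilde{c}(z)=c(\gamma z)$, verify they satisfy \textbf{IVP}, and invoke the uniqueness in Theorem~\ref{Picard}. The only difference is that you have written out the ``direct calculation'' the paper leaves to the reader.
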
 

\begin{proof} 
Define functions $S$ and $C$ in $B_r (0)$ by $S(z) = \overline{\gamma} \, s(\gamma z)$ and $C(z) = c( \gamma z)$. By direct calculation, $S$ and $C$ satisfy {\bf IVP}; by Theorem \ref{Picard}, they coincide with $s$ and $c$ respectively.  
\end{proof} 

\medbreak

Similar consideration of $\overline{s(\overline{z})}$ and $\overline{c(\overline{z})}$ as functions of $z$ shows that $s$ and $c$ are `real' in the sense that $s(\overline{z}) = \overline{s(z)}$ and $c(\overline{z}) = \overline{s(z)}$. Threefold application of Theorem \ref{six} shows that $s$ is odd and $c$ is even. 

\medbreak 

Further, let 
$$\delta =  e^{\ii \pi /6} = \tfrac{1}{2} (\sqrt{3} + \ii)$$
so that $\delta^2 = \gamma$ and $\delta^6 = -1$. Define functions $f$ and $g$ by 
$$f(z) = \overline{\delta} s(\delta z) \; \; \; {\rm and} \; \; \; g(z) = s(\delta z)$$ 
for all $z$ in the disc $B_r (0)$. In the present context, this construction serves as a counterpart to the passage from circular functions to hyperbolic functions. 

\medbreak 

\begin{theorem} \label{hyp} 
The functions $f$ and $g$ satisfy the initial value problem 
$$f\,' = g^5, \, g\,' = f^5 ; \; f(0) = 0, \, g(0) = 1$$ 
and the identity 
$$g^6 - f^6 = 1.$$ 
\end{theorem}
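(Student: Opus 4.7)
The plan is a direct verification by chain rule, using the differential equations in \textbf{IVP}, the Pythagorean identity of Theorem~\ref{Pyth}, and the algebraic facts $\delta\overline{\delta}=1$, $\delta^{6}=-1$ and hence $\overline{\delta}^{\,6}=\overline{\delta^{6}}=-1$. (I read the definition of $g$ as $g(z)=c(\delta z)$, the natural partner to the stated definition of $f$.)

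First I would dispose of the initial conditions: from $s(0)=0$ and $c(0)=1$ one reads off $f(0)=\overline{\delta}\,s(0)=0$ and $g(0)=c(0)=1$.

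Next I would differentiate. The chain rule together with \textbf{IVP} yields
\begin{align*}
f'(z) &= \overline{\delta}\,\delta\, s'(\delta z) \;=\; s'(\delta z) \;=\; c(\delta z)^{5} \;=\; g(z)^{5},\\
g'(z) &= \delta\, c'(\delta z) \;=\; -\delta\, s(\delta z)^{5}.
\end{align*}
Because $\overline{\delta}\,\delta=1$, the relation defining $f$ inverts to $s(\delta z)=\delta f(z)$, and the second line becomes $-\delta\cdot\delta^{5}f(z)^{5}=-\delta^{6}f(z)^{5}=f(z)^{5}$, which is the second differential equation.

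Finally, for the identity I would apply Theorem~\ref{Pyth} at the point $\delta z$ to obtain $s(\delta z)^{6}+c(\delta z)^{6}=1$, and then raise the defining formulas to the sixth power: $f(z)^{6}=\overline{\delta}^{\,6}s(\delta z)^{6}=-s(\delta z)^{6}$ while $g(z)^{6}=c(\delta z)^{6}$, whence $g^{6}-f^{6}=1$. There is no real obstacle anywhere in this plan; the only conceptual point worth flagging is that twisting by the primitive twelfth root of unity $\delta$ converts the sixth-power Pythagorean identity into its hyperbolic analogue precisely because $\overline{\delta}^{\,6}=-1$, the same mechanism by which a rotation by $\ii$ converts $\sin$ and $\cos$ into $\sinh$ and $\cosh$ in the classical case $p=2$.
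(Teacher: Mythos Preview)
Your proof is correct and follows exactly the approach the paper indicates: a direct chain-rule verification from \textbf{IVP} together with the substitution from Theorem~\ref{Pyth}. You also correctly spotted and repaired the evident typo in the paper's definition of $g$ (it should read $g(z)=c(\delta z)$, not $s(\delta z)$), without which neither $g(0)=1$ nor the claimed identity would hold.
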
 

\begin{proof} 
The initial value problem comes from {\bf IVP} directly; the identity comes from Theorem \ref{Pyth} by substitution or by mimicking its proof. 
\end{proof} 

\medbreak 

Multiplication by $\gamma$ transforms the functions $f$ and $g$ as it transformed $s$ and $c$; similarly, $f$ is odd and $g$ is even. Moreover, $f$ and $g$ are `real' in the same sense that $s$ and $c$ are `real'; in particular, $f$ is real-valued on the real interval $(- r, r) = \R \cap B_r (0)$. Consequently, $s$ maps the line segment $(-r, r) \, \delta$ to itself. Were this consequence alone required, it would follow from Theorem \ref{six} and the ensuing discussion, for 
$$(-r, r) \, \delta = \{ z \in B_r (0) : z = \gamma \overline{z} \}$$ 
and if $x \in (-r, r)$ then 
$$s(\delta x) = s(\gamma \overline{\delta} x) = \gamma s(\overline{\delta} x) = \gamma \overline{s(\delta x)}.$$ 

\medbreak 

Now, our fundamental concern is the question whether or not the functions $s$ and $c$ (more generally, products of their powers) extend to functions that are meromorphic in the plane. Let us assume that the solution pair $(s, c)$ has been extended beyond $B_r (0)$ to a connected open neighbourhood of the origin. A pole for one function in the pair $(s, c)$ is a pole for the other; say its order is $m$ for $s$ and $n$ for $c$. From {\bf IVP} it follows that $m + 1 = 5 n$ and $n + 1 = 5 m$ and therefore that $m = n = 1/4$. This elementary observation shows that $s$ and $c$ themselves cannot be extended meromorphically (unless they are actually extended holomorphically); however, it leaves open the possibility that at least one of the fourth-degree functions $s^4, \, s^3 c, \, s^2 c^2, \, s c^3, \, c^4$ does so extend. As we now proceed to demonstrate, the function $s^2 c^2$ admits extension to a function that is meromorphic in the plane, indeed is elliptic; as we more than demonstrate subsequently, the remaining four quartics lack meromorphic extensions to the plane. 

\medbreak 

The affirmative case is readily settled, in fact with an explicit identification of the elliptic continuation. For the quartic 
$$q = s^2 c^2$$ 
we have 
$$q\,' = 2 s s\,' c^2 + 2 s^2 c c\,' = 2 s c^7 - 2 s^7 c = 2 s c( c^6 - s^6)$$
so that 
$$(q\,')^2 = 4 s^2 c^2 \{ (c^6 + s^6)^2 - 4 s^6 c^6 \}$$ 
whence by Theorem \ref{Pyth} 
$$(q\,')^2 = 4 q \, (1 - 4 q^3).$$ 

\medbreak 

\begin{theorem} \label{elliptic} 
The quartic $s^2 c^2$ extends to the elliptic function $1/\wp$ where $\wp = \wp (-\, ; \, 0, 16)$ is the Weierstrass function with invariants $g_2 = 0$ and $g_3 = 16$. 
\end{theorem}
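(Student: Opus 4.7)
The plan is to identify $q = s^2 c^2$ with $P := 1/\wp$ on a neighbourhood of the origin; since $P$ is meromorphic in $\mathbb{C}$, this will supply the desired meromorphic extension of $q$. The strategy has two pieces: verify that $P$ satisfies the same ODE as $q$, and invoke a uniqueness statement to identify them from their common germ at $0$.

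For the ODE, use $P' = -\wp'/\wp^2$ together with the Weierstrass identity $(\wp')^2 = 4\wp^3 - 16$ to compute
$$(P')^2 = \frac{4\wp^3 - 16}{\wp^4} = 4P - 16 P^4 = 4P(1 - 4P^3),$$
which is exactly the identity $(q')^2 = 4q(1 - 4q^3)$ derived just above the statement. Moreover both $P$ and $q$ are analytic at $0$ with a zero there ($P$ because $\wp$ has a double pole at $0$; $q$ because $s(0) = 0$), and neither vanishes identically.

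The crux is uniqueness for the ODE. I would show that if $u$ is analytic near $0$ with $u(0) = 0$, $u \not\equiv 0$, and $(u')^2 = 4u(1 - 4u^3)$, then $u$ is uniquely determined. Writing $u(z) = a_k z^k + \cdots$ with $a_k \neq 0$ and comparing leading terms on the two sides of $(u')^2 = 4u - 16 u^4$ forces $k = 2$ and $a_2 = 1$; a routine induction then fixes every subsequent Taylor coefficient from the same identity (at order $z^n$ the relation reads $(4n - 4) a_n = (\text{polynomial in } a_3, \ldots, a_{n-1})$). Applied to $u = q$ and to $u = P$, this yields $q = P$ on a neighbourhood of $0$, which I expect to be the main obstacle: because the right-hand side $4u(1 - 4u^3)$ vanishes at $u = 0$, the classical Picard theorem does not apply out of the box, and one must bypass it either by the coefficient recursion just sketched or by the substitution $u = h^2$ (so that $h = sc$ in the IVP case), which recasts the equation as $(h')^2 = 1 - 4h^6$ with nowhere-vanishing right-hand side at $h = 0$, amenable to standard uniqueness with initial data $h(0) = 0$, $h'(0) = 1$.
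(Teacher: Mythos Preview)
Your argument is correct and is essentially the paper's proof run in the opposite direction: the paper sets $p = 1/q$, obtains $(p')^2 = 4p^3 - 16$, and then quotes the classical fact that the solutions of this equation are precisely the translates of $\wp(-\,;0,16)$, the translate being fixed by the double pole of $p$ at the origin; you instead pull $P = 1/\wp$ over to the equation $(P')^2 = 4P(1-4P^3)$ already established for $q$ and supply your own uniqueness argument at $0$. The only substantive difference is in that uniqueness step: the paper leans on the standard classification of solutions of the Weierstrass equation, whereas your Taylor-coefficient recursion (or the equivalent square-root substitution $u=h^2$, reducing to $h'= (1-4h^6)^{1/2}$ with $h(0)=0$, $h'(0)=1$) makes the argument self-contained and sidesteps the failure of Picard at the singular initial value $u(0)=0$.
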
 

\begin{proof} 
With $p = 1/q$ we have $q\,' = q^2 p\,'$ so that 
$$q^4 (p\,')^2 = 4 q \, (1 - 4 q^3)$$ 
and therefore 
$$(p\,')^2 = 4 p^3 - 16.$$ 
The solutions to this first-order differential equation are the translates of $\wp (-\, ; \,  0, 16)$. The proof is concluded by noting that $p = 1/q$ has a double pole at the origin, since $q = s^2 c^2$ has a double zero there. 
\end{proof} 

\medbreak 

By a convenient and harmless abuse, we may identify $(s^2 c^2)^{-1}$ with the Weierstrass function $\wp (-\, ; \, 0, 16)$. In future, we may similarly conflate a function and one of its extensions, even when a notational constituent of the function is not extended. 

\medbreak 

Building upon the foundation of this affirmative case, we shall now demonstrate that the `pure' quartics $s^4$ and $c^4$ do not extend to functions that are meromorphic in the plane. In fact, it is convenient to go further: we shall show that neither $s^{12}$ nor $c^{12}$ so extends. 

\medbreak 

\begin{theorem} \label{pure}
Neither $s^{12}$ nor $c^{12}$ extends to a function that is meromorphic in the plane. 
\end{theorem}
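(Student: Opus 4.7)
The plan is to assume, toward a contradiction, that $s^{12}$ extends to a meromorphic function $F$ on $\C$, and to extract an obstruction from the branching of $\sqrt{\wp}$ at zeros of $\wp$. First I form the elementary symmetric functions of $s^{12}$ and $c^{12}$: squaring the Pythagorean identity of Theorem \ref{Pyth} yields $s^{12} + c^{12} = 1 - 2(sc)^{6} = 1 - 2/\wp^{3}$, while $s^{12} c^{12} = 1/\wp^{6}$, using Theorem \ref{elliptic} to replace $s^{2} c^{2}$ by $1/\wp$. Thus near the origin $s^{12}$ satisfies the quadratic $X^{2} - (1 - 2/\wp^{3})\,X + 1/\wp^{6} = 0$ with meromorphic coefficients, and by analytic continuation $F$ satisfies the same quadratic throughout $\C$.

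Completing the square and invoking the Weierstrass relation $(\wp')^{2} = 4 \wp^{3} - 16$ (equivalently $\wp^{3} - 4 = (\wp')^{2}/4$), a short manipulation reduces this to
$$\Bigl( F - \tfrac{1}{2}\bigl( 1 - 2/\wp^{3} \bigr) \Bigr)^{2} = \frac{(\wp')^{2}}{16 \, \wp^{3}}.$$
The left side is meromorphic on the simply connected domain $\C$, so the right side must admit a meromorphic square root on $\C$; this in turn requires that every zero and every pole of $(\wp')^{2}/\wp^{3}$ have even order.

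The contradiction comes from inspecting any zero $z_{0}$ of $\wp$. Such zeros exist because $\wp$ is nonconstant elliptic; and from $(\wp')^{2} = 4 \wp^{3} - 16$ one reads off $\wp'(z_{0})^{2} = -16 \neq 0$, so $z_{0}$ is a \emph{simple} zero of $\wp$. Hence $\wp^{3}$ has a zero of order exactly $3$ at $z_{0}$, while $(\wp')^{2}$ is finite and nonzero there, so $(\wp')^{2}/\wp^{3}$ has a pole of the odd order $3$. This precludes the required meromorphic square root and hence the existence of $F$. The case of $c^{12}$ follows by the same argument, or alternatively: a meromorphic extension of $c^{12}$ would, via the identity $s^{12} c^{12} = 1/\wp^{6}$, produce one of $s^{12}$.

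The only substantive step is the completion of the square, which exposes $\sqrt{\wp}$ as the source of the obstruction; after that, the simplicity of the zeros of $\wp$ falls out at once from $g_{3} = 16 \neq 0$, and the parity argument concludes the proof.
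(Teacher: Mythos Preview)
Your proof is correct and follows essentially the same route as the paper's: both arrive at the quadratic $X^{2} + (2\wp^{-3} - 1)X + \wp^{-6} = 0$ satisfied by $s^{12}$ (you via Vieta on the symmetric functions $s^{12}+c^{12}$ and $s^{12}c^{12}$, the paper via $s^{6} = s^{12} + \wp^{-3}$), complete the square using $(\wp')^{2} = 4\wp^{3} - 16$ to force $(\wp')^{2}/\wp^{3}$ to be the square of a meromorphic function, and derive the contradiction from the odd-order poles at the simple zeros of $\wp$. The only cosmetic difference is that the paper moves the factor $(\wp')^{2}$ to the other side, writing $16(\wp')^{-2}\bigl(S + \wp^{-3} - \tfrac{1}{2}\bigr)^{2} = \wp^{-3}$, which is the same parity obstruction in slightly different dress.
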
 

\begin{proof} 
We again press the capitalized symbols $S$ and $C$ into temporary service, this time defining $S = s^{12}$ and $C = c^{12}$. From Theorem \ref{Pyth} and Theorem \ref{elliptic} we deduce that 
$$\wp^{-3} = s^6 c^6 = s^6 (1 - s^6) = s^6 - s^{12}$$ 
so that 
$$S + \wp^{-3} = s^6$$ 
and therefore 
$$(S + \wp^{-3})^2 = S.$$ 
Rearrangement yields 
$$S^2 + (2 \wp^{-3} - 1) S + \wp^{-6} = 0$$ 
or 
$$(S + \wp^{-3} - \tfrac{1}{2})^2 = \frac{\wp^3 - 4}{4 \wp^3}$$ 
upon completing the square. Recalling from within Theorem \ref{elliptic} the differential equation satisfied by the Weierstrass function, there follows 
$$16 \, (\wp\,')^{-2} \, (S + \wp^{-3} - \tfrac{1}{2})^2 = \wp^{-3}.$$
Now, suppose that $S = s^{12}$ extends to a function that is meromorphic in the plane; the quadratic identity just established continues to hold. However: the function on the left has poles of even order, because it is a square; the function on the right has poles of odd order, because the zeros of $\wp$ are simple. This contradiction faults the supposition that $s^{12}$ extends meromorphically in the plane. The argument that $C = c^{12}$ does not extend is parallel: indeed, the symmetry of Theorem \ref{Pyth} and Theorem \ref{elliptic} in $s$ and $c$ ensures that $C$ also satisfies the identity $(C + \wp^{-3})^2 = C$. 
\end{proof} 

\medbreak 

We leave as an exercise the similar (but slightly more elaborate) verification that neither $s^{24}$ nor $c^{24}$ admits meromorphic extension in the plane. It will be found that if $E$ denotes either of these functions then  
$$4 \, (\wp\,')^{-2} \, (E - \wp^{-6} - \tfrac{1}{8} \wp^{-3} (\wp\,')^2)^2 = (1 + \tfrac{1}{16} \wp^3 (\wp\,')^2) \, \wp^{-9}.$$

\medbreak 

We may also settle (negatively) the question whether either of the remaining quartics extends meromorphically in the plane. 

\medbreak 

\begin{theorem} \label{31+13} 
Neither $s^3 c$ nor $s c^3$ extends to a function that is meromorphic in the plane. 
\end{theorem}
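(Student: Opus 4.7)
The plan is to reduce the statement to Theorem \ref{pure} by means of a pair of elementary algebraic identities. Although $s^3 c$ and $s c^3$ are fourth-degree in $s$ and $c$, their squares couple to $s^2 c^2$—an already-identified elliptic quantity—in a way that isolates the pure powers $s^4$ and $c^4$, which in turn are cube roots of the functions forbidden by Theorem \ref{pure}.

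Concretely, Theorem \ref{elliptic} gives $s^2 c^2 = 1/\wp$, so on any neighbourhood of the origin on which the solution pair $(s,c)$ is defined one has
$$(s^3 c)^2 = s^4 \cdot s^2 c^2 = \frac{s^4}{\wp} \quad \text{and} \quad (s c^3)^2 = c^4 \cdot s^2 c^2 = \frac{c^4}{\wp}.$$
Equivalently, $s^4 = \wp \, (s^3 c)^2$ and $c^4 = \wp \, (s c^3)^2$. These identities are immediate from Theorem \ref{elliptic}, and by analytic continuation they persist on any connected open neighbourhood of the origin to which $s^3 c$ and $s c^3$ might be extended.

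With this in hand the rest is a short contradiction. Suppose $s^3 c$ extended to a function meromorphic in the plane; then $(s^3 c)^2$ would be meromorphic, and multiplying by the plane-meromorphic function $\wp$ would show that $s^4 = \wp \, (s^3 c)^2$ also extends meromorphically in the plane, whence so does its cube $s^{12} = (s^4)^3$, in flat contradiction with Theorem \ref{pure}. The case of $s c^3$ is handled symmetrically using $c^4 = \wp \, (s c^3)^2$ and the non-extension of $c^{12}$. There is no real obstacle: once one spots the factorizations $(s^3 c)^2 = s^4/\wp$ and $(s c^3)^2 = c^4/\wp$, Theorem \ref{pure} does all the work, and no fresh pole-parity or residue analysis is required.
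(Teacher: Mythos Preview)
Your proof is correct and follows essentially the same route as the paper: square the quartic, use $s^2 c^2 = 1/\wp$ from Theorem \ref{elliptic} to isolate $s^4$ (respectively $c^4$), and then contradict Theorem \ref{pure} via the cube $s^{12}$ (respectively $c^{12}$). The only difference is that you spell out the cubing step explicitly, whereas the paper leaves it implicit in its appeal to Theorem \ref{pure}.
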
 

\begin{proof} 
If $s^3 c$ or $s c^3$ were to extend meromorphically then so would $s^6 c^2$ or $s^2 c^6$; by Theorem \ref{elliptic} it would then follow that $s^4$ or $c^4$ extends, contrary to Theorem \ref{pure}. 
\end{proof} 

Further, the quotient $s/c$ is not the restriction to $B_r (0)$ of a function that is meromorphic in the plane: if it were, then so would be $s^4 = (s^2 c^2)(s/c)^2$; but this is contrary to fact. 

\medbreak 

A closer inspection of the quotient $s/c$ will actually enable us to determine precisely which products of the form $s^m c^n$ are the restrictions of functions that are meromorphic in the plane. The presentation to come will supersede some of what has gone before; we retain Theorem \ref{pure} for the sake of its proof. 

\medbreak 

\begin{theorem} \label{t}
The quotient $t = s/c$ satisfies the differential equation 
$$(t\,')^3 = 1 + t^6$$
and the initial condition 
$$t\,' (0) = 1.$$ 
\end{theorem}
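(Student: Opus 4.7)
The plan is to compute $t'$ directly from the quotient rule using the differential equations in \textbf{IVP}, then invoke Theorem \ref{Pyth} to simplify.

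First I would write
$$t\,' = \frac{s\,' c - s c\,'}{c^2} = \frac{c^5 \cdot c - s \cdot (-s^5)}{c^2} = \frac{c^6 + s^6}{c^2},$$
and then apply Theorem \ref{Pyth} to reduce the numerator to $1$, obtaining the clean expression
$$t\,' = \frac{1}{c^2}.$$
From here the differential equation follows by cubing:
$$(t\,')^3 = \frac{1}{c^6} = \frac{c^6 + s^6}{c^6} = 1 + \Bigl(\frac{s}{c}\Bigr)^6 = 1 + t^6,$$
where the second equality is again Theorem \ref{Pyth}.

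For the initial condition, substitution of $c(0) = 1$ into $t\,' = 1/c^2$ immediately yields $t\,'(0) = 1$. There is no real obstacle here: the entire argument is a two-line calculation, with the only nontrivial input being the Pythagorean-type identity of Theorem \ref{Pyth}, used once to kill the numerator and once to recognize the cube.
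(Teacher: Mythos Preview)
Your proof is correct and essentially identical to the paper's: both compute $t' = (s'c - sc')/c^2 = (c^6+s^6)/c^2 = 1/c^2$ via Theorem~\ref{Pyth}, cube to obtain $(t')^3 = 1/c^6 = 1 + t^6$, and read off $t'(0)=1$ from $c(0)=1$.
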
 

\begin{proof} 
The differential equation rests on Theorem \ref{Pyth}: 
$$\big(\frac{s}{c}\big)\,' = \frac{c s\,' - s c\,'}{c^2} = \frac{c^6 + s^6}{c^2} = \frac{1}{c^2}$$
so 
$$\big(\frac{s}{c}\big)\,'^{\,\,3} = \frac{1}{c^6} = \frac{c^6 + s^6}{c^6} = 1 + \big(\frac{s}{c}\big)^6.$$
The initial condition is obvious; note that the stated initial condition implies the condition $t(0) = 0$ but not vice versa. 
\end{proof} 

\medbreak 

Let us subject the quotient $t = s/c$ to the same transformation that was used in Theorem \ref{hyp}: for $z \in B_r (0)$ define 
$$h(z) = \overline{\delta} \, t(\delta z) = \frac{\overline{\delta} \, s(\delta z)}{c(\delta z)} = \frac{f(z)}{g(z)}.$$
It follows that the quotient $h = f/g$ satisfies the differential equation 
$$(h\,')^3 = 1 - h^6$$
together with the initial conditions 
$$h\,' (0) = 1 \; \; {\rm and} \; \; h(0) = 0.$$ 
By integration, it follows that for $z$ in a sufficiently small disc about the origin
$$z = \int_0^{h(z)} \frac{{\rm d}\zeta}{(1 - \zeta^6)^{1/3}}$$
where the cube root in the integrand has its principal value. 

\medbreak 

This puts us in contact with classical Schwarz-Christoffel theory. Recall that the rule 
$$w \mapsto \int_0^w \frac{{\rm d}\zeta}{(1 - \zeta^6)^{1/3}}$$
(with principal-valued cube root) maps the open unit disc $\mathbb{D}$ conformally to the open regular hexagon $\mathbb{H}$ with $0$ as its centre and with the real point 
$$L = \int_0^1 \frac{{\rm d}\xi}{(1 - \xi^6)^{1/3}}$$ 
as one of its vertices, the full set of vertices being 
$$\{ L, L \gamma, - L \overline{\gamma}, - L, - L \gamma, L \overline{\gamma} \}; $$
further, this rule maps the closed unit disc $\overline{\mathbb{D}}$ to the closed regular hexagon $\overline{\mathbb{H}}$ homeomorphically. 

\medbreak 

It follows that the quotient $h = f/g$ extends as the inverse of this Schwarz-Christoffel transformation: it maps the open regular hexagon $\mathbb{H}$ to the open unit disc $\mathbb{D}$ conformally, the map being continuous up to the boundary; in particular, it maps $L$ to $1$. 

\medbreak 

Reversing the transformation that led from $t = s/c$ to $h = f/g$ rotates $\mathbb{H}$ through angle $\pi/6$. Thus, $t = s/c$ admits an extension that maps the open hexagon $\delta \, \mathbb{H}$ to $\mathbb{D}$ conformally, continuously up to the boundary. Note that the hexagon $\delta \, \mathbb{H}$ has sides of length $L$ and that its boundary meets the real axis at the points $\pm K$ where 
$$K = \int_0^1 \frac{{\rm d}\xi}{(1 + \xi^6)^{1/3}}= \tfrac{1}{2} \sqrt{3} \,  L\, ; $$
the first of these expressions for $K$ records the time taken for $t$ to increase from zero to unity while the second follows from the geometry of the regular hexagon. 

\medbreak 

Let us write $\mathbb{H}_0 = \delta \, \mathbb{H}$. For each integer $n$, translation through $2 n K$ produces a congruent hexagon $\mathbb{H}_n = 2 n K + \mathbb{H}_0$. Write $\mathcal{H}$ for the union of these hexagons along with the open edges that adjacent hexagons share. 

\medbreak 

\begin{theorem} \label{band}
The quotient $t = s/c$ extends to a function that is meromorphic in $\mathcal{H}$; this extension has real period $4 K$ and is holomorphic except for simple poles at the odd multiples of $2 K$. 
\end{theorem}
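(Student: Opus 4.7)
The plan is to extend $t$ across the vertical edges of the hexagons $\mathbb{H}_n$ by iterated Schwarz reflection. The hexagon $\mathbb{H}_0 = \delta\, \mathbb{H}$ has two vertical edges, namely the open segments of the lines $\mathrm{Re}(z) = \pm K$ running between the vertices $\pm K \pm \ii L/2$. By the Schwarz--Christoffel discussion preceding the theorem, $t$ restricts to a conformal bijection of $\mathbb{H}_0$ onto $\mathbb{D}$ that is continuous up to the closure and sends each vertical edge onto an arc of the unit circle (the right edge, for instance, onto the arc from $\delta$ through $1$ to $\overline{\delta}$).

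First I would apply the Schwarz reflection principle across the right vertical edge: setting
$$t(z) = \frac{1}{\overline{t(2K - \overline{z})}} \qquad (z \in \mathbb{H}_1)$$
yields a holomorphic extension to $\mathbb{H}_1$ that is continuous across the shared open edge. Since $t(0) = 0$, $t\,'(0) = 1$, and $t$ is odd, the Taylor expansion at the origin has leading term $t(w) = w + O(w^7)$; substituting $w = 2K - \overline{z}$ shows that the extended $t$ has a simple pole at $2K$ with residue $-1$. An entirely parallel reflection across the left vertical edge extends $t$ into $\mathbb{H}_{-1}$ with a simple pole at $-2K$.

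Next I would iterate. Each $t|_{\mathbb{H}_n}$ is a conformal bijection onto $\mathbb{D}$ or $\mathbb{C} \setminus \overline{\mathbb{D}}$ (alternately), so $|t| = 1$ holds on each of its vertical edges, and reflection across the right edge of $\mathbb{H}_n$ extends $t$ into $\mathbb{H}_{n+1}$. Inductively this produces a meromorphic function on all of $\mathcal{H}$. Composing any two consecutive reflections---first across $\mathrm{Re}(z) = (2n+1)K$, then across $\mathrm{Re}(z) = (2n+3)K$---yields the holomorphic translation $z \mapsto z + 4K$, so the extended $t$ satisfies $t(z + 4K) = t(z)$. The poles are the successive images of the unique zero $0 \in \mathbb{H}_0$ under these vertical reflections: the first reflection sends $0$ to $2K$, and $4K$-periodicity then places simple poles precisely at $(4k+2)K$ for $k \in \mathbb{Z}$, i.e.\ at the odd multiples of $2K$.

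The principal obstacle is verifying the Schwarz reflection hypotheses at each stage: continuity up to the open shared edges (granted initially by the Schwarz--Christoffel construction and preserved thereafter by reflection) and unit-modulus boundary values on those edges (preserved by the alternation between $\mathbb{D}$ and $\mathbb{C} \setminus \overline{\mathbb{D}}$). The hexagon vertices above and below the real axis lie outside $\mathcal{H}$ by definition, so no corner regularity issue arises.
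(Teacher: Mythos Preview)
Your proof is correct and follows essentially the same route as the paper: Schwarz reflection across the vertical edges of $\mathbb{H}_0$ (using that $|t|=1$ there), iteration to fill out $\mathcal{H}$, and the observation that a double reflection is the translation $z\mapsto z+4K$, giving the period and the pole locations. You supply a bit more detail than the paper (the explicit reflection formula, the residue at $2K$, and the remark that the hexagon vertices lie outside $\mathcal{H}$), but the argument is the same in substance.
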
 

\begin{proof} 
As the values of $t$ on the boundary of $\mathbb{H}_0 = \delta \, \mathbb{H}$ lie in the unit circle, $t$ extends merorphically across the vertical edges of $\mathbb{H}_0$ by Schwarz reflexion, the simple zero at $0$ producing simple poles at $\pm 2 K$. This extends $t$ from the hexagon $\mathbb{H}_0$ to the two adjacent hexagons $\mathbb{H}_{\pm 1}$; repeated reflexion extends $t$ across the whole array of hexagons. To see that $t$ has $4 K$ as period, let $z_0 \in \mathbb{H}_0$ have images $z_{-1} \in \mathbb{H}_{-1}$ and $z_1 \in \mathbb{H}_1$ under reflexion in the vertical edges of $\mathbb{H}_0$: on the one hand, $z_1 = z_{-1} + 4 K$ by inspection; on the other hand, Schwarz symmetry yields  
$$t(z_1) = 1/\overline{t(z_0)} = t(z_{-1}).$$ 
\end{proof} 

\medbreak 

Note that $t$ further extends continuously up to the boundary of $\mathcal{H}$ and that its values on the boundary have unit modulus. The value of $t$ at each hexagonal vertex is a sixth root of $- 1$; in particular, $t(\delta \, L) = \delta$. 

\medbreak 

It follows that $t$ is a simply periodic meromorphic function in the horizontal band 
$$\{ z \in \C: 2 \, | {\rm Im} \, z | < L\}$$
of width $L$ centred on the real axis. We remark that $t$ does not extend meromorphically to any wider band: in fact, we claim that the vertices of the hexagons prohibit such extension. As $t$ is not constant and $t^6 = -1$ at the vertices of each hexagon, our claim is substantiated by the following lemma. 

\medbreak 

\begin{theorem} \label{holdup}
Let $t$ be holomorphic and satisfy $(t')^3 = 1 + t^6$ in the connected open set $U$. If $t'$ vanishes at some point of $U$ then $t$ is constant throughout $U$. 
\end{theorem}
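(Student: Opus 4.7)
The plan is to work locally around a supposed zero of $t\,'$ via power series and use the differential equation to force the leading orders of the two sides to be incompatible unless $t$ is constant. Global constancy then follows from the identity principle on the connected set $U$.

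First, suppose $t\,'(z_0) = 0$ for some $z_0 \in U$, and set $a = t(z_0)$. Evaluating the differential equation at $z_0$ gives $0 = 1 + a^6$, so $a$ is a sixth root of $-1$; in particular $a \neq 0$. Suppose for contradiction that $t$ is not constant on any neighbourhood of $z_0$. Writing the holomorphic function $t - a$ as a power series about $z_0$, there exist an integer $k \geq 1$ and a constant $c \neq 0$ with
$$t(z) = a + c(z - z_0)^k + O((z - z_0)^{k+1}).$$
The hypothesis $t\,'(z_0) = 0$ forces $k \geq 2$.

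Next, I would compare the leading orders on the two sides of $(t\,')^3 = 1 + t^6$. Differentiating the expansion gives $t\,'(z) = kc(z - z_0)^{k-1} + O((z-z_0)^k)$, hence
$$(t\,')^3 = k^3 c^3 (z - z_0)^{3(k-1)} + O((z - z_0)^{3k-2}).$$
On the other side, using $1 + a^6 = 0$,
$$1 + t^6 = 6 a^5 c \, (z - z_0)^k + O((z - z_0)^{k+1}),$$
and the coefficient $6 a^5 c$ is nonzero because $a \neq 0$ and $c \neq 0$. For the differential equation to hold identically, the orders of vanishing at $z_0$ must agree: $3(k - 1) = k$, i.e.\ $k = 3/2$. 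This is impossible for an integer $k \geq 2$, so our assumption was false; $t$ must be constant on some neighbourhood of $z_0$.

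Finally, because $U$ is connected and $t$ is holomorphic, the identity principle propagates this local constancy throughout $U$. The only step requiring genuine care is the order comparison: one must make sure the leading coefficient $6 a^5 c$ on the right really is nonzero (this uses $a^6 = -1$, so $a \neq 0$), and one must handle the two possible imbalances $3(k-1) > k$ and $3(k-1) < k$ separately to conclude they are both incompatible with a nonzero leading term on the opposite side. I expect this to be the only delicate bookkeeping; there is no conceptual obstacle.
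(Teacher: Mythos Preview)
Your argument is correct, but it takes a different route from the paper's. The paper works globally: it differentiates the relation $(t')^3 = 1 + t^6$ to obtain $t'\,(t' t'' - 2t^5) = 0$ identically on $U$, then uses the fact that the ring of holomorphic functions on a connected open set has no zero divisors to conclude that one factor vanishes identically; since $t'(z_0)=0$ forces $t(z_0)^5\neq 0$, the second factor is nonzero at $z_0$, so $t'\equiv 0$. Your local power-series order comparison is more hands-on and avoids invoking that structural fact, at the price of a little bookkeeping; the paper's route is slicker but leans on the integral-domain property of $\mathcal{O}(U)$. One small simplification to your write-up: for $k\geq 2$ one has $3(k-1)-k = 2k-3 \geq 1$, so only the imbalance $3(k-1)>k$ ever occurs, and no case split is needed --- the coefficient of $(z-z_0)^k$ on the left is $0$ while on the right it is $6a^5c\neq 0$, which is already the contradiction.
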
 

\begin{proof} 
By differentiation, $3 \, (t')^2 \, t'' = 6 \, t^5 \, t'$ so that 
$$t' \, (t' \, t'' - 2 \, t^5) = 0$$
throughout $U$. As $U$ is connected, either $t\,' \equiv 0$ or $t' \, t'' - 2 t^5 \equiv 0$. However, if $a \in U$ has $t' (a) = 0$ then $t(a) \neq 0$: thus the latter alternative fails, so the former alternative holds; this forces $t$ to be constant throughout $U$. 
\end{proof} 

\medbreak 

In particular, the vertices of the hexagons prevent extension of $t = s/c$ from $\mathcal{H}$ to a function that is meromorphic in the plane. We can say more. 

\medbreak 

\begin{theorem} \label{tpower}
If $\ell$ is a nonzero integer, then $t^{\ell} = (s/c)^{\ell}$ does not extend to a function that is meromorphic in the plane. 
\end{theorem}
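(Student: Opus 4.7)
The strategy is to localize at a vertex of the hexagonal tiling appearing in Theorem \ref{band}, show that $t$ has a $(z - v)^{3/2}$ branching there, and observe that this branching survives under every nonzero integer power.

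Fix a vertex $v$ of $\mathbb{H}_0$ and write $\omega = t(v)$, so that $\omega^6 = -1$ and in particular $\omega \neq 0$. I would first establish a Puiseux expansion of the form
$$ t(z) = \omega + \alpha\,(z - v)^{3/2} + o\bigl((z - v)^{3/2}\bigr), $$
valid for $z \in \mathbb{H}_0$ near $v$, with $\alpha \neq 0$. Substituting $t = \omega + \tau$ into the identity $(t\,')^3 = 1 + t^6$ of Theorem \ref{t} and exploiting $\omega^6 = -1$, one obtains the leading balance $(\tau\,')^3 \sim 6\,\omega^5\,\tau$; matching exponents and leading coefficients supplies both the exponent $3/2$ and a concrete value for $\alpha^2$ (namely $\tfrac{16}{9}\omega^5$), which is nonzero because $\omega \neq 0$.

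Since $\omega^\ell$ is finite and nonzero, a binomial expansion then delivers
$$ t(z)^\ell = \omega^\ell + \ell\,\omega^{\ell - 1}\,\alpha\,(z - v)^{3/2} + o\bigl((z - v)^{3/2}\bigr), $$
whose fractional-power coefficient $\ell\,\omega^{\ell - 1}\,\alpha$ is nonzero because $\ell \neq 0$.

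I would conclude by contradiction. If $t^\ell$ extends to a function $F$ meromorphic in the plane, then $F(v) = \omega^\ell$ is finite and nonzero, so $F$ is holomorphic in a neighbourhood of $v$ and admits there a convergent Taylor expansion in integer powers of $z - v$. Restricting this Taylor expansion to $\mathbb{H}_0$ near $v$ and comparing with the display above --- for example, by dividing by $(z - v)^{3/2}$ and letting $z \to v$ through $\mathbb{H}_0$ --- forces $\ell\,\omega^{\ell - 1}\,\alpha = 0$, the desired contradiction. The main obstacle is the rigorous nonvanishing of the leading Puiseux coefficient $\alpha$, which is secured by the leading-order ODE calculation sketched above; everything else reduces to routine comparison of expansions.
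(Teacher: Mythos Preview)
Your argument is sound and takes a genuinely different route from the paper. The paper argues by Schwarz reflection rather than local analysis: it considers the hexagon $\mathbb{A}$ adjacent to both $\mathbb{H}_0$ and $\mathbb{H}_1$ (all three sharing the vertex $\delta L$), and notes that any meromorphic extension of $t^{\ell}$ would have to agree with both reflections into $\mathbb{A}$; but reflecting across the edge shared with $\mathbb{H}_0$ sends the zero of $t^{\ell}$ at $0$ to a pole at the centre $a$ of $\mathbb{A}$, while reflecting across the edge shared with $\mathbb{H}_1$ sends the pole at $2K$ to a zero at $a$, a contradiction. Your approach instead localizes at a single vertex and exploits the $3/2$-power branching there. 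The paper's proof is more geometric and needs no asymptotics; yours makes more explicit \emph{why} the obstruction lives at the vertices, tying in naturally with Theorem~\ref{holdup}.

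One point of rigor to tighten: the leading-balance calculation tells you what the Puiseux expansion \emph{must} be if it exists, but does not by itself prove existence. You should ground this in the explicit Schwarz--Christoffel inverse already available: from $(t')^3 = 1 + t^6$ one has $z - v = \int_{\omega}^{t(z)}(1+\zeta^6)^{-1/3}\,d\zeta$, and since the integrand behaves like a constant times $(\zeta - \omega)^{-1/3}$ near $\omega$, integration gives $z - v \sim c\,(t(z)-\omega)^{2/3}$, hence $t(z) - \omega \sim \alpha\,(z-v)^{3/2}$ with $\alpha \neq 0$ in the hexagonal sector at $v$. With that secured, the binomial step and the comparison with a putative Taylor expansion go through exactly as you describe.
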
 

\begin{proof} 
Without loss, take $\ell$ to be positive. Theorem \ref{band} provides a unique meromorphic extension of $t^{\ell}$ to the band $\mathcal{H}$ that is continuous up to the boundary, where it takes values in the unit circle. Now, let $\mathbb{A}$ be the translate of the hexagon $\mathbb{H}_0 = \delta \, \mathbb{H}$ with centre $a = \delta \, L + \ii \, L$: the regular hexagon $\mathbb{A}$ shares an edge with each of $\mathbb{H}_0$ and $\mathbb{H}_1$; all three hexagons share $\delta \, L$ as a vertex. Schwarz symmetry precludes enlarging the domain of $t^{\ell}$ to include $\mathbb{A}$: on the one hand, reflexion across the edge shared by $\mathbb{A}$ and $\mathbb{H}_0$ would convert the zero of $t^{\ell}$ at $0$ to a pole of $t^{\ell}$ at $a$; on the other hand, reflexion across the edge shared by $\mathbb{A}$ and $\mathbb{H}_1$ would convert the pole of $t^{\ell}$ at $2 K$ to a zero of $t^{\ell}$ at $a$
\end{proof} 

\medbreak 

We are now in a position to provide the advertised determination of precisely which products $s^m c^n$ extend meromorphically in the plane. 

\medbreak 

\begin{theorem} \label{extend}
If $m$ and $n$ are integers, then $s^m c^n$ extends to a function that is meromorphic in the plane precisely when $m$ and $n$ are equal and even. 
\end{theorem}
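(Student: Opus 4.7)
The plan is a two-step reduction: first pin down $m = n$ by feeding Theorem \ref{tpower} a suitable combination of $F = s^m c^n$ with powers of $\wp$, and then pin down the parity of $m$ by a pole-order count at a simple zero of $\wp$. Sufficiency is immediate from Theorem \ref{elliptic}: if $m = n = 2k$, then $s^m c^n = (s^2 c^2)^k = \wp^{-k}$ is manifestly meromorphic on the plane. The real work lies in necessity.

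For necessity, suppose $s^m c^n$ admits a meromorphic extension $F$ to $\mathbb{C}$. I would combine $F$ with $\wp$ (which is entirely meromorphic) to isolate a pure integer power of $t = s/c$. A direct computation on $B_r(0)$, using $\wp = (s^2 c^2)^{-1}$, gives
$$F^4 \cdot \wp^{m + n} \;=\; s^{4m} c^{4n} \cdot (s^2 c^2)^{-(m+n)} \;=\; t^{2(m - n)}.$$
The left-hand side is meromorphic on $\mathbb{C}$ by construction, so by the identity theorem $t^{2(m-n)}$ extends meromorphically to $\mathbb{C}$; Theorem \ref{tpower} then forces $2(m - n) = 0$, whence $m = n$. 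Raising $F$ to the fourth power rather than just squaring is the clean way to sidestep a parity case-split in $m + n$, so that the exponent on $\wp$ is automatically an integer.

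With $m = n$ in hand, $F = (sc)^m$ satisfies $F^2 = (s^2 c^2)^m = \wp^{-m}$ on $B_r(0)$, and hence on all of $\mathbb{C}$ by analytic continuation. Pick any zero $z_0$ of $\wp$; such zeros exist because $\wp$ is elliptic of degree two, and they are simple since $(\wp'(z_0))^2 = 4\wp(z_0)^3 - 16 = -16 \neq 0$. Consequently $\wp^{-m}$ has order exactly $-m$ at $z_0$, forcing $\mathrm{ord}_{z_0} F = -m/2$; for this to be an integer, $m$ must be even.

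The main obstacle is guessing the Step 1 identity — recognising in advance that the specific combination $F^4 \cdot \wp^{m+n}$ collapses to the pure power $t^{2(m-n)}$. Once this algebraic packaging is found, Theorem \ref{tpower} delivers $m = n$ instantly and Step 2 reduces to a one-line pole-order count at a zero of $\wp$; I expect the heart of the argument to be the construction of the reducing identity rather than any deeper complex-analytic machinery.
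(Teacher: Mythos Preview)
Your proof is correct and rests on the same engine as the paper's --- the reduction to Theorem~\ref{tpower} via the factorization $s^m c^n = (s^2 c^2)^{(m+n)/4}\,(s/c)^{(m-n)/2}$ --- but you package the two halves in the reverse order. The paper first invokes the pole-order discussion preceding Theorem~\ref{elliptic} to force $4 \mid m+n$, which makes that factorization integral outright and delivers the parity of $m=n$ for free once Theorem~\ref{tpower} gives $m=n$. You instead raise to the fourth power so that $F^4\,\wp^{m+n} = t^{2(m-n)}$ is integral regardless, extract $m=n$ from Theorem~\ref{tpower} first, and only then settle parity by reading off the order of $F^2 = \wp^{-m}$ at a simple zero of $\wp$. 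Your route has the advantage of being entirely self-contained: it sidesteps the somewhat heuristic appeal to ``poles of order $1/4$'' for $s$ and $c$ individually (which tacitly assumes the extension of $s^m c^n$ actually acquires a pole) and replaces it with a clean order count at a point where $\wp$ is already known to vanish simply.
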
 

\begin{proof} 
If $k$ is an integer then $s^{2 k} c^{2 k}$ extends to the elliptic function $\wp^{- k}$ according to Theorem \ref{elliptic}. In the opposite direction, suppose that $s^m c^n$ extends meromorphically to the plane. The discussion prior to Theorem \ref{elliptic} shows that the total degree $m + n$ of $s^m c^n$ must be a multiple of $4$; say $m + n = 4 k$. It follows that $m - n$ must be even; write $m - n = 2 \ell$. Now the factorization 
$$s^m c^n = (s^2 c^2)^{k} \, (s/c)^{\ell}$$ 
or equivalently  
$$(s/c)^{\ell} = (s^2 c^2)^{-k} \, (s^m c^n)$$
implies that $(s/c)^{\ell}$ extends meromorphically to the plane. Finally, Theorem \ref{tpower} forces $\ell = 0$ and we conclude that $m = n = 2k$. 
\end{proof} 

\medbreak 

Thus, the only products $s^m c^n$ that extend meromorphically to the plane actually extend as elliptic functions: namely, as powers of the Weierstrass function $\wp( - \, ; 0, 16)$. 

\medbreak 

Finally, we end on a more positive note: each of the products $s^m c^n$ for which $m + n$ is a multiple of $4$ extends to the band $\mathcal{H}$ of hexagons as a meromorphic function that is simply periodic. That each extends meromorphically is evident from the fact that $s^2 c^2$ and $s/c$ so extend. To show that each meromorphic extension is simply periodic and to determine its period,  we compare the least positive period of $\wp$ in Theorem \ref{elliptic} and the least positive period of $t$ in Theorem \ref{band}.

\medbreak 

The least positive period of $\wp$ is $2 \omega$ where $\omega$ is the least positive zero of $\wp'$. From the differential equation 
$$(\wp\,')^2 = 4 \wp^3 - 16$$ 
it follows that $\wp(\omega) = 4^{1/3}$ and 
$$\omega = -  \int_0^{4^{1/3}} \frac{{\rm d}\xi}{(4 \xi^3 - 16)^{1/2}}. $$
The substitution $u = 4 \xi^{-3}$ converts this to a Beta integral, which may be evaluated in terms of the Gamma function: 
$$\omega = \frac{1}{3 \cdot 4^{1/3}} \int_0^1 u^{-5/6} (1 - u)^{-1/2} {\rm d} u = \frac{1}{6 \cdot 2^{1/3}} \frac{\Gamma(\tfrac{1}{6}) \Gamma(\tfrac{1}{2})}{\Gamma(\tfrac{2}{3})}\,.$$

\medbreak 

The least positive period of $t$ is $4 K$ where 
$$K = \int_0^1 \frac{{\rm d}\xi}{(1 + \xi^6)^{1/3}};$$
we may evaluate this in terms of the Gamma function as follows. By inversion, 
$$\int_0^1 \frac{{\rm d}\xi}{(1 + \xi^6)^{1/3}} = \int_1^{\infty} \frac{{\rm d}\xi}{(1 + \xi^6)^{1/3}}= \frac{1}{2} \, \int_0^{\infty} \frac{{\rm d}\xi}{(1 + \xi^6)^{1/3}} $$ 
and by the substitution $u = (1 + \xi^6)^{-1}$ we deduce that  
$$\int_0^{\infty} \frac{{\rm d}\xi}{(1 + \xi^6)^{1/3}} = \frac{1}{6} \int_0^1 u^{-5/6} (1 - u)^{-5/6} {\rm d} u = \frac{1}{6} \frac{\Gamma(\tfrac{1}{6})^2}{\Gamma(\tfrac{1}{3})} \, .$$
Thus 
$$K = \frac{1}{12} \frac{\Gamma(\tfrac{1}{6})^2}{\Gamma(\tfrac{1}{3})} \, .$$

\medbreak 

The Gamma duplication formula allows us to relate the numbers $\omega$ and $K$: in fact, the duplication formula 
$$2 \Gamma(\tfrac{1}{2}) \Gamma (2 z) = 2^{2 z} \Gamma(z) \Gamma(z + \tfrac{1}{2})$$
with $z = 1/6$ yields at once their equality: 
$$\omega = K.$$ 
\medbreak 

\begin{theorem} \label{scband}
If $m + n$ is a multiple of $4$ then $s^m c^n$ extends to be meromorphic and simply periodic in the band $\mathcal{H}$. 
\end{theorem}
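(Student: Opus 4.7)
The plan is to use the factorization from the proof of Theorem \ref{extend}. Since $m+n$ is a multiple of $4$, the integers $m$ and $n$ have the same parity, so $m-n$ is even; set $k = (m+n)/4$ and $\ell = (m-n)/2$. On a neighbourhood of the origin on which $c$ does not vanish we then have the identity
$$s^m c^n = (s^2 c^2)^k \, (s/c)^{\ell} = \wp^{-k} \, t^{\ell},$$
with $\wp = \wp(-\,;\,0,16)$ and $t = s/c$, using Theorem \ref{elliptic} to identify $(s^2 c^2)^{-1}$ with $\wp$.

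Next I would check that the right-hand side is meromorphic throughout $\mathcal{H}$. Theorem \ref{elliptic} presents $\wp^{-k}$ as an elliptic function, and Theorem \ref{band} presents $t^{\ell}$ as meromorphic on $\mathcal{H}$ (the case $\ell < 0$ is unproblematic, since $t$ is a non-constant meromorphic function on $\mathcal{H}$, so $t^{\ell} = 1/t^{|\ell|}$ remains meromorphic, with its poles at the zeros of $t$). By the identity principle, the right-hand side gives the sought meromorphic extension of $s^m c^n$ to $\mathcal{H}$.

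For simple periodicity I would appeal to the Gamma-function computation preceding the theorem. Because $\omega = K$, the Weierstrass function $\wp$ has $2K$ as a real period; so does $\wp^{-k}$. Theorem \ref{band} gives $4K$ as a real period of $t$, hence of $t^{\ell}$. The product therefore has $4K$ as a real period on $\mathcal{H}$. Since $\mathcal{H}$ has bounded vertical extent, any period of a non-constant meromorphic function on $\mathcal{H}$ must be real, so the extension of $s^m c^n$ is simply periodic. The entire argument is essentially bookkeeping on top of Theorems \ref{elliptic} and \ref{band} together with the duplication-formula identity $\omega = K$, and no new analytic obstacle arises; the only point requiring minor care is the factorization when $\ell < 0$, which was already handled above.
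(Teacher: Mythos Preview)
Your proof is correct and follows essentially the same route as the paper: factor $s^m c^n = (s^2 c^2)^k (s/c)^\ell$, invoke Theorems~\ref{elliptic} and~\ref{band} for the meromorphic extension to $\mathcal{H}$, and use the identity $\omega = K$ to produce a common real period. The paper adds slightly more precision by distinguishing whether the least period is $2K$ or $4K$ (according as $\ell$ is even or odd) and by explicitly setting aside the trivial case $m=n=0$, while you supply the extra remark that the bounded width of $\mathcal{H}$ rules out non-real periods.
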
 

\begin{proof} 
We of course dismiss the trivial case $m = n = 0$. As noted previously, $m - n$ is even. Now, consider the factorization 
$$s^m c^n = (s^2 c^2)^{(m + n)/4} (s/c)^{(m - n)/2}$$
and refer to Theorem \ref{elliptic} and Theorem \ref{band}. Both $s^2 c^2$ and $s/c$ certainly extend to be meromorphic in $\mathcal{H}$; accordingly, so does $s^m c^n$. The function $s^2 c^2 = \wp^{-1}$ has period $2 \omega = 2 K$. The odd function $s/c = t$ has period $4 K$; its square $(s/c)^2$ has period $2 K$. Consequently, $s^m c^n$ has $2 K$ or $4 K$ as period according to whether the even integer $m - n$ is or is not a multiple of $4$; equivalently in the present context, according to whether $m$ and $n$ are both even or both odd. 
\end{proof} 

\medbreak 

\bigbreak

\begin{center} 
{\small R}{\footnotesize EFERENCES}
\end{center} 
\medbreak 

[1] A.C. Dixon, {\it On the doubly periodic functions arising out of the curve $x^3 + y^3 - 3 \alpha x y = 1$}, The Quarterly Journal of Pure and Applied Mathematics, 24 (1890) 167-233. 

\medbreak 

[2] E. Hille, {\it Ordinary Differential Equations in the Complex Domain}, Wiley-Interscience (1976); Dover Publications (1997).

\medbreak 

[3] A. Levin, {\it A Geometric Interpretation of an Infinite Product for the Lemniscate Constant}, The American Mathematical Monthly, Volume 113 Number 6 (2006) 510-520. 

\medbreak 

[4] P.L. Robinson, {\it The elliptic functions in a first-order system}, arXiv 1903.07147 (2019). 

\medbreak

\end{document}